\newtheorem{theorem}{Theorem}[section]
\newtheorem{lemma}[theorem]{Lemma}
\newtheorem{definition}[theorem]{Definition}
\theoremstyle{definition}
\newtheorem{example}[theorem]{Example}
\theoremstyle{remark}
\numberwithin{equation}{section}
\begin{document}

\title{Asymptotic density of rational sets in free abelian groups}

\author{Anton Menshov}
\address{Institute of Mathematics and Information Technologies\\Omsk State Dostoevskii University}
\curraddr{}
\email{menshov.a.v@gmail.com}
\thanks{}

\keywords{rational sets, free abelian groups, asymptotic density, Ehrhart quasipolynomials.}

\date{}

\begin{abstract}
In this paper we study asymptotic density of rational sets in free abelian group $\mathbb{Z}^n$ of rank $n$.
We show that any rational set $R$ in  $\mathbb{Z}^n$ has asymptotic density.
If $R$ is given by its semi-simple decomposition we show how to compute its asymptotic density.
\end{abstract}

\maketitle

\section{Preliminaries}
\label{sec:prelim}

\subsection{Rational sets}
\label{subsec:rational}

Let $M$ be a monoid, the class of {\em rational} subsets of $M$ is the least class $\mathfrak{R}$ of subsets of $M$ satisfying the following conditions:
\begin{enumerate}
\item[(1R)] $\emptyset, \{m\} \in \mathfrak{R}$ for $m \in M$.
\item[(2R)] If $X, Y \in \mathfrak{R}$ then $X \cup Y \in \mathfrak{R}$.
\item[(3R)] If $X, Y \in \mathfrak{R}$ then $XY = \{xy \mid x \in X,~ y \in Y\} \in \mathfrak{R}$.
\item[(4R)] If $X \in \mathfrak{R}$ then $X^* = \bigcup\limits_{n \geq 0} X^n \in \mathfrak{R}$.
\end{enumerate}

If $M$ is a free finitely generated monoid, then according to Kleene's theorem the rational sets are precisely the subsets of $M$ recognizable by finite state automata.

We may also define the smaller class of {\em unambiguously rational} subsets of $M$ by leaving condition (1R) but replacing conditions (2R)-(4R) by stronger conditions (2UR)-(4UR) as follows:
\begin{enumerate}
\item[(2UR)] If $X, Y \in \mathfrak{R}$ and $X \cap Y = \emptyset$ then $X \cup Y \in \mathfrak{R}$.
\item[(3UR)] If $X, Y \in \mathfrak{R}$ and the product $XY$ is unambiguous (i.~e., $x_1y_1 = x_2y_2$ for $x_1,x_2 \in X, y_1,y_2 \in Y$ implies $x_1=x_2,y_1=y_2$), then $XY \in \mathfrak{R}$.
\item[(4UR)] If $X \in \mathfrak{R}$ and $X$ is the basis of free submonoid $X^*$ of $M$, then $X^* \in \mathfrak{R}$.
\end{enumerate}

We will study rational sets in commutative monoids, so we will use additive notation.
In line with this in conditions (3R) and (3UR) $XY$ will be replaced with $X+Y$.

The study of rational sets in a commutative monoid $M$ is simplified by the following notions.
A subset
\[
X = a + B^*
\]
with $a \in M, B \subset M, B$ finite, is called {\em linear}.
If $B^*$ is a free commutative monoid with basis $B$, then $X$ is called {\em simple}.
If $B = \{ b_1, \dots, b_r \}$ is a set of $r$ elements, then every element $x \in X$ may be written as
\[
x = a + n_1 b_1 + \dots + n_r b_r,
\]
where $n_i \in \mathbb{N}$.
If $X$ is simple, then $n_1, \dots, n_r$ are unique.

A finite union of linear sets is called {\em semi-linear}.
A finite {\em disjoint} union of simple sets is called {\em semi-simple}.

Cleary every semi-linear set is rational and every semi-simple set is unambiguously rational. The converse is also true (see \cite{ES}).

It is known that in a free monoid $M$ every rational set is unambiguously rational.
The main result of \cite{ES} states that in a commutative monoid $M$ every rational set is unambiguously rational, and it follows from \cite{ES} that every rational set $R \subseteq M$ can be presented as a semi-simple set.

\subsection{Lattice point counting}
\label{subsec:lattice_points}
Counting lattice points in the integral dilates of a subset of Euclidean space $\mathbb{R}^n$ is a well known problem.
For rational polytopes this problem has been studied in the $1960$s by the French mathematician Eug\`{e}ne Ehrhart (see \cite{BR}).
We recall some basic notions here.
A {\em convex polytope} in $\mathbb{R}^n$ is a finite intersection of closed half-spaces, i.e.,
\[
\mathcal{P} = \{ \mathbf{x} \in \mathbb{R}^n \mid A\mathbf{x} \leq b \}, \ \ where \ A \in \mathbb{R}^{mn}, b \in \mathbb{R}^m.
\]
A bounded convex polytope $\mathcal{P}$ is called {\em rational} if all of its vertices have rational coordinates.
We will call the least common multiple of the denominators of the coordinates of the vertices of $\mathcal{P}$ the {\em denominator} of $\mathcal{P}$.

We recall that a {\em quasipolynomial} $Q$ is an expression of the form $Q(t) = c_n(t)t^n + \dots + c_1(t)t + c_0(t)$, where $c_0, \dots, c_n$ are periodic functions in $t$ and $c_n$ is not the zero function.
The {\em degree} of $Q$ is $n$, and the least common period of $c_0, \dots, c_n$ is the {\em period} of $Q$.

For $t \in \mathbb{Z}^{+}$ and $S \subseteq \mathbb{R}^n$ denote $t S = \{ t\mathbf{x} \mid \mathbf{x} \in S \}$ the $t^{th}$ dilate of $S$.
We denote the {\em lattice-point enumerator} for the $t^{th}$ dilates of $S$ by
\[
L_{S}(t) = |t S \cap \mathbb{Z}^n|.
\]
The {\em dimension} of $S \subseteq \mathbb{R}^n $ is the dimension of the affine space
\[
\mathrm{span}~S = \{ \mathbf{x} + \lambda (\mathbf{y}-\mathbf{x}) \mid \mathbf{x}, \mathbf{y} \in S, \lambda \in \mathbb{R} \}
\]
spanned by $S$.
If a polytope $\mathcal{P}$ has dimension $d$, we call $\mathcal{P}$ a $d$-polytope.

\begin{theorem}[\cite{BR}, Theorem 3.23]\label{th:ehrhart}
If $\mathcal{P}$ is a rational convex $n$-polytope, then $L_{\mathcal{P}}(t)$ is a quasipolynomial in $t$ of degree $n$.
Its period divides the denominator of $\mathcal{P}$.
\end{theorem}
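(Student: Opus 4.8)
The plan is to prove the statement by the generating-function (coning) method, first reducing to the case of a simplex. Any rational convex $n$-polytope $\mathcal{P}$ admits a triangulation into rational $n$-simplices using only the vertices of $\mathcal{P}$ (a pulling triangulation, say); each such simplex has its vertices among those of $\mathcal{P}$, so its denominator divides the denominator of $\mathcal{P}$. If I can show that each $n$-simplex $\Delta$ contributes a quasipolynomial $L_{\Delta}(t)$ of degree $n$ whose period divides the denominator of $\Delta$, then by inclusion--exclusion over the simplices and their shared faces---the latter being lower-dimensional and hence contributing quasipolynomials of degree $< n$---the count $L_{\mathcal{P}}(t)$ is again a quasipolynomial whose period divides the denominator of $\mathcal{P}$. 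The degree is exactly $n$ because the simplices have positive $n$-dimensional volume, so the leading terms cannot all cancel.

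The core is therefore the simplex case, which I would attack by \emph{coning}. Given a rational $n$-simplex $\Delta \subset \mathbb{R}^n$ with vertices $\mathbf{v}_1, \dots, \mathbf{v}_{n+1}$, I embed it at height $1$ in $\mathbb{R}^{n+1}$ via $\mathbf{v}_i \mapsto (\mathbf{v}_i, 1)$ and let $\mathcal{C}$ be the cone generated by these lifted vertices. By construction the lattice points of $\mathcal{C}$ at height $t$ are precisely the points of $t\Delta \cap \mathbb{Z}^n$, so the integer-point transform
\[
\sigma_{\mathcal{C}}(\mathbf{z}, z) = \sum_{(\mathbf{m},k) \in \mathcal{C} \cap \mathbb{Z}^{n+1}} \mathbf{z}^{\mathbf{m}} z^{k}
\]
encodes every count $L_{\Delta}(t)$: after specializing $\mathbf{z} = \mathbf{1}$, the number $L_{\Delta}(t)$ is the coefficient of $z^t$.

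Next I would put $\sigma_{\mathcal{C}}$ in closed form. Since $\mathcal{C}$ is a simplicial cone, it is tiled by the lattice translates of its half-open fundamental parallelepiped $\Pi$ under the semigroup generated by the lifted generators, which yields the factorization
\[
\sigma_{\mathcal{C}}(\mathbf{z}, z) = \frac{\sigma_{\Pi}(\mathbf{z}, z)}{\bigl(1 - \mathbf{z}^{\mathbf{v}_1} z\bigr)\cdots\bigl(1 - \mathbf{z}^{\mathbf{v}_{n+1}} z\bigr)},
\]
where $\sigma_{\Pi}$ is a polynomial because $\Pi$ contains only finitely many lattice points. Specializing $\mathbf{z} = \mathbf{1}$ collapses this to a rational function $g(z)/(1 - z^{p})^{n+1}$, where $p$ is the least common height at which each generator becomes integral, i.e. the denominator of $\Delta$. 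Expanding $1/(1-z^{p})^{n+1}$ via binomial-coefficient series and extracting the coefficient of $z^t$ then exhibits $L_{\Delta}(t)$ as a quasipolynomial in $t$ of degree $n$ with period dividing $p$.

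The step I expect to be the main obstacle is making the half-open tiling of the cone rigorous while correctly tracking the period: one must choose the fundamental domain so that its lattice translates partition $\mathcal{C}$ with no overlaps (this is what the half-open convention achieves), and one must verify that clearing denominators in the factors $1 - \mathbf{z}^{\mathbf{v}_i} z$ really produces a denominator dividing $(1-z^{p})^{n+1}$ with $p$ equal to the denominator of $\Delta$. Once this is settled, the return to general polytopes is routine provided the triangulation is handled with half-open decompositions (or, equivalently, inclusion--exclusion), which prevents over-counting the lattice points lying on faces shared between simplices.
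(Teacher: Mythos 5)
The paper does not prove this statement at all: it is Ehrhart's theorem, imported as a black box from the cited reference (\cite{BR}, Theorem 3.23), so there is no internal proof to compare against. Your proposal is, in outline, precisely the proof given in that reference---triangulate into rational simplices using the vertices of $\mathcal{P}$, cone each simplex to height $1$, tile the simplicial cone by lattice translates of a half-open fundamental parallelepiped, and extract coefficients from $g(z)/(1-z^p)^{n+1}$---and it is sound. The one point to fix is the step you yourself flagged: for a non-integral rational simplex the lifted vertices $(\mathbf{v}_i,1)$ are not lattice points, so the monomials $\mathbf{z}^{\mathbf{v}_i}z$ and the tiling are not defined as written; one must instead use the rescaled generators $p_i(\mathbf{v}_i,1)\in\mathbb{Z}^{n+1}$ (with $p_i$ the denominator of $\mathbf{v}_i$), after which each factor specializes at $\mathbf{z}=\mathbf{1}$ to $1-z^{p_i}$, and since $p_i \mid p$ each such factor divides $1-z^{p}$, yielding the desired form $g(z)/(1-z^p)^{n+1}$ with $p$ the denominator of the simplex.
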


This result is due to Eug\`{e}ne Ehrhart, in whose honor $L_{\mathcal{P}}$ is called the {\em Ehrhart quasipolynomial} of $\mathcal{P}$.

The leading coefficient of $L_{\mathcal{P}}(t)$ is equal to $n$-dimensional volume of $\mathcal{P}$, i.e., it is a constant.

We note that there is an algorithm by Alexander Barvinok to compute Ehrhart quasipolynomials. Barvinok's algorithm is polynomial in fixed dimension, it has been implemented in the software package $\mathsf{LattE}$ \cite{L}.

Let $\mathbf{w_1}, \dots, \mathbf{w_n}$ be lineary independent vectors in $\mathbb{R}^n$.
The set
\[
\Lambda = \Lambda(\mathbf{w_1}, \dots, \mathbf{w_n}) =
\{ \alpha_1 \mathbf{w_1} + \dots + \alpha_n \mathbf{w_n} \mid \alpha_i \in \mathbb{Z} \}
\]
is called the {\em lattice} with basis $\{\mathbf{w_1}, \dots, \mathbf{w_n}\}$.
The number
\[
d(\Lambda) = | \det (\mathbf{w_1}, \dots, \mathbf{w_n}) |
\]
is called the {\em determinant} of the lattice.

We note that theorem \ref{th:ehrhart} remains true if we replace the standard integer lattice $\mathbb{Z}^n$ by an arbitrary lattice $\Lambda(\mathbf{w_1}, \dots, \mathbf{w_n})$.
Indeed, consider the matrix $A = (\mathbf{w_1}, \dots, \mathbf{w_n})$ formed by $\mathbf{w_i}$ as columns and let $\psi$ be the linear transformation corresponding to $A$.
Then $\Lambda = \psi(\mathbb{Z}^n)$ and $\mathbb{Z}^n = \psi^{-1}(\Lambda)$.
If $\mathcal{P}$ is a rational $n$-polytope with respect to the basis $\{\mathbf{w_1}, \dots, \mathbf{w_n}\}$, then $\psi^{-1}(\mathcal{P})$ is a rational $n$-polytope with respect to the standard basis of $\mathbb{R}^n$ and
\[
    L_{\mathcal{P}, \Lambda}(t) =
    \left| \{ t\mathcal{P} \cap \Lambda \} \right| =
    \left| \{ t\psi^{-1}(\mathcal{P}) \cap \mathbb{Z}^n \} \right|.
\]
Given that $\mathrm{vol}(\mathcal{P}) = |\det(A)| \mathrm{vol}(\psi^{-1}(\mathcal{P}))$ we get that the leading coefficient of $L_{\mathcal{P}, \Lambda}(t)$ is equal to $\frac{\mathrm{vol}(\mathcal{P})}{d(\Lambda)}$.

It is known that if $S \subset \mathbb{R}^n$ is a bounded convex $n$-dimensional set with piecewise smooth boundary then its volume can be computed as
\[
\mathrm{vol}(S) =
\lim_{r \to \infty} \frac{|\mathbb{Z}^n \cap rS|}{r^n}.
\]
Hence $|\mathbb{Z}^n \cap rS| \sim \mathrm{vol}(S) r^n$.
Arguing as above one can show that for an arbitrary lattice $\Lambda = \Lambda(\mathbf{w_1}, \dots, \mathbf{w_n})$
\begin{equation}\label{eq:points-asymptotics}
|\Lambda \cap rS| \sim \frac{\mathrm{vol}(S)}{d(\Lambda)} r^n.
\end{equation}
If $S$ has dimension $d < n$ then $|\Lambda \cap rS| = o(r^n)$.

\subsection{Asymptotic density}
\label{subsec:density}
A {\em stratification} of a countable set $T$ is a  sequence $\{T_r\}_{r \in \mathbb{N}}$ of non-empty finite subsets $T_r$ whose union is $T$. 
Stratifications are often specified by length functions. A {\em length function} on $T$ is a map $l: T \to \mathbb{N}$ from $T$ to the nonnegative integers $\mathbb{N}$ such that the inverse image of every integer is finite. The corresponding spherical and ball stratifications are formed by {\em spheres} $S_r = \{x \in T \mid l(x) = r\}$ and {\em balls} $B_r = \{x \in T \mid l(x) \leq r\}$.

\begin{definition}
The asymptotic density of $M \subset T$ with respect to a stratification $\{T_r\}$ is defined to be
\[
\rho(M) = \lim_{r\to\infty} \rho_r(M), \ \ \ \mbox{where} \ \ \  \rho_r(M) = \frac{|M \cap T_r|} {|T_r|}
\]
 when the limit exists. Otherwise, we use the limits 
\[
\bar{\rho}(M) = \limsup_{r\to\infty} \rho_r(M), \ \ \ \underline{\rho}(M) = \liminf_{r\to\infty} \rho_r(M),
\]
and call them upper and lower asymptotic densities respectively.
\end{definition}

Asymptotic density is one of the tools for measuring sets in infinite groups (see \cite{BMS} for details).

Let $\mathbb{Z}^n$ be a free abelian group of rank $n$.
We identify $\mathbb{Z}^n$ with the standard integer lattice in Euclidean space $\mathbb{R}^n$.
We assume that $\mathbb{R}^n$ is equipped with $\| \cdot \|_p$-norm for some $1 \leq p \leq \infty$.
This norm induces the lenght function $ l_p: \mathbb{Z}^n \to \mathbb{N}$ with balls
\[
B_{p,r}(\mathbb{Z}) = B_{p,r} \cap \mathbb{Z}^n,
\]
where $B_{p,r} = \{ v \in \mathbb{R}^n \mid \|v\|_p \leq r \}$ is the ball of radius $r$ in $\mathbb{R}^n$ with respect to $\| \cdot \|_p$.
For $M \subseteq \mathbb{Z}^n$ the corresponding relative frequences are defined by
\[
\rho_{p,r}(M) = \frac{|M \cap B_{p,r}|}{|B_{p,r}(\mathbb{Z})|}.
\]
We will denote the corresponding densities by $\rho_p$, $\bar{\rho}_p$ and $\underline{\rho}_p$.

From (\ref{eq:points-asymptotics}) it follows that $|B_{p,r}(\mathbb{Z})| \sim \mathrm{vol}(B_{p,1}) r^n $.
Thus
\begin{equation}\label{eq:balls-limit}
\lim_{r \to \infty} \frac{|B_{p,r+1}(\mathbb{Z})|}{|B_{p,r}(\mathbb{Z})|} = 1.
\end{equation}
We will call $\rho_p$ {\em invariant} if for any $M \subseteq \mathbb{Z}^n$ with $\rho_p(M) = \rho$ and any $v \in \mathbb{Z}^n$ we have $\rho_p(v+M) = \rho$.
According to \cite[Proposition 2.3]{BV} natural density in finitely generated groups is invariant.
In particular, $\bar{\rho}_1$ is invariant.
A similar arguments show that $\rho_p$ is invariant.

\begin{lemma}\label{lm:shift}
Asymptotic density $\rho_p$ is invariant for any $1 \leq p \leq \infty$.
%Let $M \subseteq \mathbb{Z}^n$ then if $\rho_{p}(M)$ exists then for any $v \in \mathbb{Z}^n$ also $\rho_{p}(v+M)$ exists and $\rho_{p}(v+M) = \rho_{p}(M)$.
\end{lemma}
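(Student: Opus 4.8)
The plan is to compare the shifted frequencies $\rho_{p,r}(v+M)$ with the original frequencies $\rho_{p,r}(M)$ by sandwiching the translated ball between two concentric balls whose radii differ from $r$ only by the fixed amount $\|v\|_p$. First I would observe that translation is a cardinality-preserving bijection of $\mathbb{Z}^n$, so that
\[
|(v+M) \cap B_{p,r}| = |M \cap (B_{p,r}-v)|,
\]
where $B_{p,r}-v = \{ y \in \mathbb{R}^n \mid \|y+v\|_p \leq r \}$ is the ball of radius $r$ centered at $-v$. Since $M \subseteq \mathbb{Z}^n$, the right-hand side simply counts the lattice points of $M$ lying in this translated ball.

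Next I would use the triangle inequality for $\|\cdot\|_p$ to trap the translated ball between concentric balls. Setting $c = \lceil \|v\|_p \rceil$, one checks that $\|y\|_p \leq r-c$ implies $\|y+v\|_p \leq r$, and that $\|y+v\|_p \leq r$ implies $\|y\|_p \leq r+c$; hence
\[
B_{p,r-c} \subseteq B_{p,r}-v \subseteq B_{p,r+c}.
\]
Intersecting with $M$ and counting then yields the two-sided bound
\[
|M \cap B_{p,r-c}| \leq |(v+M)\cap B_{p,r}| \leq |M \cap B_{p,r+c}|,
\]
valid for all $r \geq c$.

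Dividing throughout by $|B_{p,r}(\mathbb{Z})|$ and factoring each outer term as
\[
\frac{|M \cap B_{p,r \pm c}|}{|B_{p,r}(\mathbb{Z})|} = \rho_{p,r\pm c}(M)\cdot \frac{|B_{p,r\pm c}(\mathbb{Z})|}{|B_{p,r}(\mathbb{Z})|},
\]
I would then let $r \to \infty$. Because $\rho_p(M) = \rho$ exists, both $\rho_{p,r-c}(M)$ and $\rho_{p,r+c}(M)$ tend to $\rho$, since shifting the index by the fixed constant $c$ does not affect the limit. The remaining factor is handled by the established asymptotics $|B_{p,r}(\mathbb{Z})| \sim \mathrm{vol}(B_{p,1})\,r^n$, from which $|B_{p,r\pm c}(\mathbb{Z})|/|B_{p,r}(\mathbb{Z})| \to 1$ (equivalently, iterate (\ref{eq:balls-limit}) the fixed number $c$ of times). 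The squeeze theorem then gives $\rho_{p,r}(v+M) \to \rho$, i.e. $\rho_p(v+M) = \rho$.

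The main obstacle is the boundary effect: I must be sure that the lattice points in the spherical shell between radii $r-c$ and $r+c$ are asymptotically negligible compared with the whole ball. This is precisely what the ratio $|B_{p,r\pm c}(\mathbb{Z})|/|B_{p,r}(\mathbb{Z})| \to 1$ encodes, and it is the only step where the genuine growth rate $r^n$ of the ball count, rather than its mere finiteness, is needed. Everything else reduces to the bijectivity of translation and the triangle inequality.
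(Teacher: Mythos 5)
Your proof is correct and follows essentially the same route as the paper: sandwich $|(v+M)\cap B_{p,r}|$ between $|M\cap B_{p,r-c}|$ and $|M\cap B_{p,r+c}|$ via the triangle inequality, divide by $|B_{p,r}(\mathbb{Z})|$, and conclude with the squeeze theorem using the fact that the ratio of consecutive (or constant-shifted) ball cardinalities tends to $1$. The only cosmetic difference is that you treat an arbitrary shift $v$ in one step with $c = \lceil \|v\|_p \rceil$, whereas the paper first reduces to unit shifts $e = \pm e_i$ and shifts the radius by $1$; both are equally valid.
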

\begin{proof}
It is enough to show that $\rho_{p}(e + M) = \rho_{p}(M)$, where $e = \pm e_i $ and $\{e_1, \dots, e_n\}$ is the standard basis of $\mathbb{R}^n$.

Observe, that $e + (M \cap B_{p,r}) \subseteq (e + M) \cap B_{p,r+1}$.
Indeed, if $w = e + m$ where $m \in (M \cap B_{p,r})$ then $\|w\|_p \leq \|e\|_p + \|m\|_p \leq r+1$, so $w \in B_{p,r+1}$.
Hence $|M \cap B_{p,r}| \leq |(e+M) \cap B_{p,r+1} |$.

Also observe, that $(e+M) \cap B_{p,r} \subseteq e + (M \cap B_{p,r+1})$.
Indeed, if $w = e + m \in (e+M) \cap B_{p,r}$ then $\|m\|_p = \|w-e\|_p \leq r+1$, so $m \in B_{p,r+1}$.
Hence $|(e+M) \cap B_{p,r}| \leq |M \cap B_{p,r+1}|$.

Combining all the above for $r > 1$ we get
\begin{equation}\label{eq:lemma1-bounds}
\frac{|M \cap B_{p,r-1}|}{|B_{p,r}(\mathbb{Z})|} \leq
\rho_{p,r}(e+M) \leq
\frac{|M \cap B_{p,r+1}|}{|B_{p,r}(\mathbb{Z})|}.
\end{equation}
From (\ref{eq:balls-limit}) it follows that
\[
\lim_{r \to \infty} \frac{|M \cap B_{p,r-1}|}{|B_{p,r}(\mathbb{Z})|} =
\lim_{r \to \infty} \frac{|M \cap B_{p,r+1}|}{|B_{p,r}(\mathbb{Z})|} =
\rho_p(M)
\]
hence
\[
\lim_{r \to \infty} \rho_{p,r}(e+M) = \rho_p(M).
\]
\end{proof}

\section{Main results}
\label{sec:main}
Subgroups in $\mathbb{Z}^n$ are particular type of rational sets.
According to \cite[Proposition 2.4]{BV} in a finitely generated group $G$ natural density of a subgroup $H$ of finite index is equal to $\frac{1}{[G:H]}$.
In particular, for a subgroup $H \leq \mathbb{Z}^n$ of finite index $\rho_1(H) =\frac{1}{[\mathbb{Z}^n:H]}$.
Observe, that $H$ is a lattice with determinant $d(H) = [\mathbb{Z}^n : H]$, hence by (\ref{eq:points-asymptotics})
\[
\rho_p(H) =
\lim_{r \to \infty} \frac{|H \cap B_{p,r}|}{|B_{p,r}(\mathbb{Z})|} = \frac{1}{[\mathbb{Z}^n : H]}.
\]
Clearly, if $H$ is of infinite index then $\rho_p(H) = 0$.

It follows from \cite{ES} that any rational set $R$ in $\mathbb{Z}^n$ can be presented as a {\em semi-simple} set
\begin{equation}\label{eq:semi-simple}
    R = \bigcup_{i = 1}^{k} (a_i + B_i^*).
\end{equation}
If $\rho_p(B_i^*)$ exists for any $B_i^*$ then by lemma \ref{lm:shift} and since the union (\ref{eq:semi-simple}) is disjoint
\begin{equation}\label{eq:density-sum}
    \rho_p(R) = \sum_{i = 1}^{k} \rho_p(a_i + B_i^*) =
    \sum_{i = 1}^{k} \rho_p(B_i^*).
\end{equation}
Thus, to compute $\rho_p(R)$ it suffices to compute asymptotic density for free commutative monoids in $\mathbb{Z}^n$.

We recall that a {\em cone} $\mathcal{K} \subseteq \mathbb{R}^n$ generated by $\mathbf{w_1}, \dots, \mathbf{w_m}$ is a set of the form
\[
\mathcal{K} =
\mathrm{cone}(\mathbf{w_1}, \dots, \mathbf{w_m}) = 
\{ \alpha_1 \mathbf{w_1} + \dots + \alpha_m \mathbf{w_m} \mid \alpha_i \geq 0, \alpha_i \in \mathbb{R} \}.
\]

\begin{lemma}\label{lm:monoid-density}
Let $B^* \subset \mathbb{Z}^n$ be a free commutative monoid with basis $\{b_1, \dots, b_k\}$, then $\rho_p(B^*)$ exists and $\rho_p(B^*) > 0$ if and only if $k = n$.
\end{lemma}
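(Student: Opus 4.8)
The plan is to realize $B^*$ geometrically as the lattice points of a rank-$k$ lattice lying in a simplicial cone, and then to reduce the computation of $\rho_p(B^*)$ to the asymptotics in (\ref{eq:points-asymptotics}). First I would observe that freeness of $B^*$ forces $b_1, \dots, b_k$ to be linearly independent over $\mathbb{R}$: a nontrivial integer relation $\sum c_i b_i = 0$, after collecting the terms with positive and negative coefficients, would produce two distinct expressions of one element as a nonnegative integer combination of the $b_i$, contradicting uniqueness. Hence $k \le n$. Setting $V = \mathrm{span}(b_1, \dots, b_k)$, $\Lambda = \Lambda(b_1, \dots, b_k)$, and $\mathcal{K} = \mathrm{cone}(b_1, \dots, b_k)$, linear independence also yields $B^* = \Lambda \cap \mathcal{K}$, since a lattice point $\sum m_i b_i$ (with $m_i \in \mathbb{Z}$) lies in $\mathcal{K}$ precisely when every $m_i \ge 0$.

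The key step is to exploit the scale-invariance of the cone: $\mathcal{K} \cap B_{p,r} = r(\mathcal{K} \cap B_{p,1})$ for all $r$. The body $S = \mathcal{K} \cap B_{p,1}$ is bounded and convex, and its boundary is piecewise smooth (parts of the facets of $\mathcal{K}$ together with a piece of the $\|\cdot\|_p$-sphere), so the hypotheses of (\ref{eq:points-asymptotics}) are met.

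If $k = n$, then $b_1, \dots, b_n$ span $\mathbb{R}^n$, so $\mathcal{K}$ is $n$-dimensional and $\mathrm{vol}(S) > 0$. Applying (\ref{eq:points-asymptotics}) to the full-rank lattice $\Lambda$ gives $|B^* \cap B_{p,r}| = |\Lambda \cap rS| \sim \frac{\mathrm{vol}(S)}{d(\Lambda)} r^n$, while $|B_{p,r}(\mathbb{Z})| \sim \mathrm{vol}(B_{p,1}) r^n$; dividing shows the limit exists and
\[
\rho_p(B^*) = \frac{\mathrm{vol}(\mathcal{K} \cap B_{p,1})}{d(\Lambda)\, \mathrm{vol}(B_{p,1})} > 0.
\]
If $k < n$, I would instead regard $\Lambda$ as a full-rank lattice inside $V \cong \mathbb{R}^k$ and apply the $k$-dimensional form of (\ref{eq:points-asymptotics}) to the $k$-dimensional body $B_{p,1} \cap V$, obtaining $|B^* \cap B_{p,r}| \le |\Lambda \cap B_{p,r}| = O(r^k)$. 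Since $|B_{p,r}(\mathbb{Z})|$ grows like $r^n$ and $k < n$, the ratio is $O(r^{k-n}) \to 0$, so $\rho_p(B^*)$ exists and equals $0$. Together these two cases establish both existence and the equivalence $\rho_p(B^*) > 0 \iff k = n$.

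I expect the main obstacle to be the careful justification that (\ref{eq:points-asymptotics}) really applies in each regime: verifying that $\mathcal{K} \cap B_{p,1}$ is genuinely a bounded convex body with piecewise smooth boundary for every $1 \le p \le \infty$ (with $p = 1, \infty$ giving polyhedral rather than smooth spheres), and, in the degenerate case $k < n$, correctly invoking the lower-dimensional analogue of the lattice-point count for the non-full-rank lattice $\Lambda$ supported on $V$. Once these geometric hypotheses are pinned down, the density computation itself is a routine division of the two asymptotics.
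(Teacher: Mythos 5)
Your proposal is correct and follows essentially the same route as the paper: identify $B^* \cap B_{p,r}$ with $\Lambda \cap r(\mathcal{K} \cap B_{p,1})$ via linear independence and the scale-invariance of the cone, then divide the lattice-point asymptotics (\ref{eq:points-asymptotics}) by $|B_{p,r}(\mathbb{Z})| \sim \mathrm{vol}(B_{p,1})r^n$, arriving at exactly the formula (\ref{eq:fcm-density}). If anything, you are slightly more careful than the paper, since you explicitly justify the linear independence of the $b_i$, the identity $B^* = \Lambda \cap \mathcal{K}$, and the reduction of the $k<n$ case to the span $V$, all of which the paper asserts without proof.
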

\begin{proof}
Since $\{b_1, \dots, b_k\}$ is the basis of $B^*$, if follows that $\{b_1, \dots, b_k\}$ are lineary independent in $\mathbb{R}^n$, thus $k \leq n$ and we can consider the lattice $\Lambda = \Lambda(b_1, \dots, b_k)$.
Denote $\mathcal{P} = B_{p,1} \cap \mathrm{cone}(b_1, \dots, b_k)$.
Observe, that
\[
B^* \cap B_{p,r} = B^* \cap (B_{p,r} \cap \mathrm{cone}(b_1, \dots, b_k)) = \Lambda \cap r \mathcal{P}.
\]
If $p=1$ or $p=\infty$ then $\mathcal{P}$ is a rational $k$-polytope and $|\Lambda \cap r\mathcal{P}| = L_{\mathcal{P}, \Lambda}(r)$ is its Ehrhart quasipolynomial.
Next
\[
\rho_{p,r}(B^*) =
\frac{|\Lambda \cap r\mathcal{P}|}{|B_{p,r}(\mathbb{Z})|}
\]
which implies that $\rho_p(B^*) = \lim\limits_{r \to \infty} \rho_{p,r}(B^*) = 0$ if $k < n$, and if $k = n$
\begin{equation}\label{eq:fcm-density}
\rho_p(B^*) =
\lim_{r \to \infty} \rho_{p,r}(B^*) = \frac{\mathrm{vol}(\mathcal{P})}{\mathrm{vol}(B_{p,1}) d(\Lambda)}.
\end{equation}
\end{proof}

Notice, that (\ref{eq:fcm-density}) gives us the method for computing asymptotic density of free commutative monoids.

The following theorem immediately follows from (\ref{eq:density-sum}) and lemma \ref{lm:monoid-density}.

\begin{theorem}
For any rational subset $R \subseteq \mathbb{Z}^n$ $\rho_p(R)$ exists.
If $R$ is given by its semi-simple decomposition
$
R = \bigcup\limits_{i = 1}^{k} (a_i + B_i^*)
$
then $\rho_p(R) = \sum\limits_{i = 1}^{k} \rho_p(B_i^*)$.
\end{theorem}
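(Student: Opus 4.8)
The plan is to reduce the statement to the two facts already established, namely Lemma \ref{lm:monoid-density} and the invariance of $\rho_p$ from Lemma \ref{lm:shift}, and then to push the densities of the individual simple pieces through the disjoint union. First I would invoke the result of \cite{ES} to guarantee that $R$ admits the semi-simple decomposition $R = \bigcup_{i=1}^k (a_i + B_i^*)$ as a \emph{disjoint} union of simple sets, so that each $B_i^*$ is a free commutative monoid on some basis and Lemma \ref{lm:monoid-density} applies to it. That lemma tells us $\rho_p(B_i^*)$ exists for every $i$ (being $0$ when the rank of the basis is less than $n$, and positive when it equals $n$). Lemma \ref{lm:shift} then gives $\rho_p(a_i + B_i^*) = \rho_p(B_i^*)$, so each translated piece has a density as well.

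The remaining work is to show that the density of the disjoint union is the sum of the densities of the pieces. For this I would use finite additivity at the level of counting: because the union in (\ref{eq:semi-simple}) is disjoint, for every radius $r$ we have
\[
|R \cap B_{p,r}| = \sum_{i=1}^k |(a_i + B_i^*) \cap B_{p,r}|.
\]
Dividing both sides by $|B_{p,r}(\mathbb{Z})|$ yields $\rho_{p,r}(R) = \sum_{i=1}^k \rho_{p,r}(a_i + B_i^*)$, a finite sum. Since each summand converges as $r \to \infty$ by the previous paragraph, the limit of the sum exists and equals the sum of the limits, giving
\[
\rho_p(R) = \sum_{i=1}^k \rho_p(a_i + B_i^*) = \sum_{i=1}^k \rho_p(B_i^*),
\]
which is exactly (\ref{eq:density-sum}) and the claimed formula.

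There is no serious analytic obstacle here: all the hard estimates live inside Lemma \ref{lm:monoid-density} (the Ehrhart-theoretic computation of $|\Lambda \cap r\mathcal{P}|$) and Lemma \ref{lm:shift} (the sandwich argument for translation invariance). The only points that require care are, first, that the decomposition supplied by \cite{ES} is genuinely disjoint, so that the counting identity above is exact with no overcounting, and second, that the number $k$ of pieces is finite, which is what lets us interchange the limit with the sum. Both are guaranteed by the definition of a semi-simple set as a finite disjoint union of simple sets, so the theorem follows immediately by assembling these ingredients.
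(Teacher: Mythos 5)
Your proposal is correct and follows essentially the same route as the paper: the paper derives equation (\ref{eq:density-sum}) from the disjointness of the semi-simple decomposition together with Lemma \ref{lm:shift}, and then observes that the theorem follows immediately from (\ref{eq:density-sum}) and Lemma \ref{lm:monoid-density}, which is exactly your assembly of ingredients. You merely make explicit the finite-additivity counting identity and the interchange of limit and finite sum, which the paper leaves implicit.
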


\begin{example}
We will compute asymptotic density of the free commutative monoid $ M = \{ (2, 1),~ (1, 2) \}^{*} \subset \mathbb{Z}^2$.
Denote $ v_{1} = (2, 1), \; v_{2} = (1, 2) $ and consider the lattice $ \Lambda = \Lambda(v_{1}, v_{2}) $ with determinant  $ d(\Lambda) = 3 $.
By (\ref{eq:fcm-density}) we get
\begin{align*}
    \rho_1(M) &= 
    \frac{\mathrm{vol}(B_{1,1} \cap \mathrm{cone}(v_{1}, v_{2}))}{\mathrm{vol}(B_{1,1}) \cdot d(\Lambda)} =
    \frac{\frac{1}{6}}{2 \cdot 3} =
    \frac{1}{36}, \\
    \rho_{\infty}(M) &= 
    \frac{\mathrm{vol}(B_{\infty,1} \cap \mathrm{cone}(v_{1}, v_{2}))}{\mathrm{vol}(B_{\infty,1}) \cdot d(\Lambda)} =
    \frac{\frac{1}{2}}{4 \cdot 3} =
    \frac{1}{24}.
\end{align*}
Thus for different $\| \cdot \|_p$-norms the corresponding asymptotic densities are not necessarily equal.
\end{example}

%%%%%%%%%%%%%%%%%%%%%%%%%%%%%%%%%%%%%%%%%%%%%%%%%%%%%%%%%%%%%%
% Literature
%%%%%%%%%%%%%%%%%%%%%%%%%%%%%%%%%%%%%%%%%%%%%%%%%%%%%%%%%%%%%%

\end{document}